\definecolor{color0}{rgb}{0.12156862745098,0.466666666666667,0.705882352941177}
\definecolor{color1}{rgb}{1,0.498039215686275,0.0549019607843137}
\definecolor{color2}{rgb}{0.172549019607843,0.627450980392157,0.172549019607843}
\newcommand{\norm}[1]{\left\lVert#1\right\rVert}
\newcommand{\rr}{\mathbb{R}}
\def\tinf{{\to\infty}}
\newtheorem{assumption}{Assumption}
\newtheorem{lemma}{Lemma}
\newtheorem{theorem}{Theorem}
\newtheorem{corollary}{Corollary}
\newtheorem{idefinition}{Definition}{\rm}
\newenvironment{definition}{\begin{idefinition} \rm}{\end{idefinition}}
\newtheorem{iremark}{Remark}{\rm}
\newenvironment{remark}{\begin{iremark} \rm}{\end{iremark}}
\begin{document}

\title{\bf Distributed interpolatory algorithms\\ for set membership estimation}

\author{
  Francesco Farina, Andrea Garulli, Antonio Giannitrapani
 \thanks{This paper has received funding from the European Union's
     Horizon 2020 Research and Innovation Programme - Societal
     Challenge 1 (DG CONNECT/H) under grant agreement n$^\circ$ 643644
     ``ACANTO - A CyberphysicAl social NeTwOrk using robot
     friends''.}
  \thanks{The authors are with the Dipartimento di Ingegneria dell'Informazione e Scienze Matematiche,
Universit{\`a} di Siena, Siena, Italy. e-mail: {\tt\small\{farina,garulli,giannitrapani\}@diism.unisi.it}}%
} 

\date{}

\maketitle
\begin{abstract}
  This work addresses the distributed estimation problem in a set membership framework. The agents of a network collect measurements which are affected by bounded errors, thus implying that the unknown parameters to be estimated belong to a suitable feasible set. Two distributed algorithms are considered, based on projections of the estimate of each agent onto its local feasible set. The main contribution of the paper is to show that such algorithms are asymptotic interpolatory estimators, i.e. they converge to an element of the global feasible set, under the assumption that the feasible set associated to each measurement is convex. The proposed techniques are demonstrated on a distributed linear regression estimation problem.

\end{abstract}
\section{Introduction}
\label{sec:Intro}
Set membership approaches to estimation problems have been studied since a long time~\cite{S68,BR71}. Under the assumption of bounded noise, the estimation process can be carried out in terms of feasible sets. A feasible measurement set is associated to each measurement. Such a set contains all the possible values of the unknown parameter compatible with the corresponding measurement and the noise bound. The intersection of all the feasible measurement sets gives the feasible parameter set, which is guaranteed to contain the unknown parameter. The set membership paradigm has been applied to a number of estimation problems, ranging from system identification to state estimation for dynamic systems (see, e.g., \cite{N87,CLPR14,cgv14tac} and references therein), including applications in a variety of fields, such as mobile robotics \cite{DGGV03,Cal05} and automotive systems \cite{KSS06,NRM11}. 

A coarse classification of set membership estimation algorithms distinguishes between set-valued estimators, aimed at providing an approximation of the generally intractable true feasible set, and \emph{pointwise estimators}, which return a single estimate of the unknown parameter enjoying some optimality or suboptimality property with respect to a suitable measure of the feasible set itself \cite{MV91}. Among pointwise estimators, the class of \emph{interpolatory estimators} has been widely investigated. These estimators return an element of the feasible set. They possess a number of nice properties and have been studied within different estimation settings (see, e.g., \cite{GK92a,MV93,G99}).

In recent years, the increasing interest toward multi-agent systems has led to the design of distributed versions of set membership estimators in order to exploit the inherent robustness and versatility of the distributed approach. For example, the problem of approximating the feasible parameter set in a sensor network has been addressed by using different approximating regions such as parallelotopes \cite{NF09}, ellipsoids \cite{NPH12} or the union of rectangular cells \cite{LK10}.

In this paper, we focus on the problem of finding an element of the feasible set in a distributed manner, i.e. a \emph{distributed interpolatory estimator}. The number of distributed optimization methods flourished very recently provide a useful framework which can be exploited for distributed estimation as well (e.g., \cite{NO09,NB11,B11} and references therein).
Two different scenarios are considered. In the first one, the agents are active one at a time, in a cyclic order. They proceed by iteratively projecting the current estimate on their local feasible set and then passing the projected estimate to the next active agent. To this aim, each agent  recursively updates its feasible set on the basis of the current measurements. The second scenario assumes that the agents process simultaneously the information provided by the local measurements. After updating the local feasible sets, they perform a consensus step on the current estimates and then project the resulting weighted averages of the estimates on the local feasible sets.
The main feature of both scenarios is that each measurement provides a new feasible measurement set, and therefore asymptotically the procedure has to deal with \emph{infinitely many} sets, a setting for which few theoretical results are available. The convergence properties of the proposed algorithms are analyzed. It is shown that if the measurement sets are convex, in both scenarios the sequence of the estimates converges to a point belonging to the true feasible parameter set, thus providing an asymptotic interpolatory estimator. These results can be seen as an extension of previous results on the convergence of alternating projection methods~\cite{ER11,G13,BB96} and of constrained consensus algorithms~\cite{NOP10,NL17}, in which a finite number of convex sets was involved. 

The paper is structured as follows. In Section~\ref{sec:PbForm}, the estimation problem in the presence of bounded measurement errors is formulated and the two distributed solutions corresponding to the considered scenarios are presented. The convergence properties of the proposed approaches are studied in Section~\ref{sec:convergence-result}. The application of these techniques to a linear regression estimation problem is illustrated in Section~\ref{sec:application}. Finally, some conclusions are drawn in Section~\ref{sec:conclusions}.

\section{Problem formulation}
\label{sec:PbForm}

\subsection{Preliminaries}
\label{sec:preliminaries}
Some basic concepts on the convergence of a sequence of sets are first recalled~\cite{R13}. Let $\{A(k)\}$, $k=1,2,\dots$, be an infinite sequence of subsets of $\rr^n$. The \emph{limit infimum} of the sequence $\{A(k)\}$ is defined as
$$
\lim_{k \tinf} \inf A(k) = \bigcup_{k\geq1} \bigcap_{j\geq k} A(j).
$$
Similarly, the \emph{limit supremum} of the sequence $\{A(k)\}$ is defined as
$$
\lim_{k \tinf} \sup A(k) = \bigcap_{k\geq1} \bigcup_{j\geq k} A(j).
$$
\begin{definition}[Limit of a sequence of sets.]
  The sequence $\{A(k)\}$ converges to the set $A$ if
  $$
  \lim_{k \tinf} \inf A(k) = \lim_{k \tinf} \sup A(k) = A,
  $$ 
  for some $A\subset \rr^n$. \qed
\end{definition}
If sequence $\{A(k)\}$ converges to  $A$, then the set $A$ is called the limit of $\{A(k)\}$ and this is denoted by writing
$$
\lim_{k\tinf} A(k) = A.
$$
\begin{definition}[Nonincreasing sequence of sets.] 
If $A(k+1) \subseteq A(k)$, for all $k$, the sequence $\{A(k)\}$ is called \emph{nonincreasing}. \qed
\end{definition}
It is easy to verify that if $\{A(k)\}$ is nonincreasing, then its limit always exists and it is given by
$$
\lim_{k \tinf}A(k) = \bigcap_{j\geq1} A(j).
$$

\subsection{Set membership estimation}
\label{sec:two-node}
Consider $N$ processing nodes which cooperate to compute an estimate of an unknown parameter $x \in \rr^n$ and denote by $x_i(k)$ the estimate computed at time $k$ by node $i$. Let $\mathcal{G} = (\mathcal{V},\mathcal{E})$, where $\mathcal{V} = \{1,\dots,N\}$ and $\mathcal{E} \subseteq \mathcal{V} \times \mathcal{V}$, be a directed graph describing the communication among the agents. An edge $(j,i) \in \mathcal{E}$ if and only if agent $j$ sends its estimate to agent $i$. In this case, we say that $j$ is a neighbor of $i$. At each time instant $k$, each node $i$ takes a measurement of a known function of $x$, which is corrupted by an unknown-but-bounded (UBB) noise. The UBB assumption allows one to define for each node $i$ and each time instant $k$ a \emph{feasible measurement set}, denoted by $M_i(k)$, which contains all the possible values of $x$ compatible with the current measurement and the noise bound. Moreover, each node stores its \emph{local feasible parameter set}, i.e. the set of the values of $x$ compatible with all the measurements taken by that node \emph{up to} time $k$. Let $X_i(k)$ be the local feasible parameter set for node $i$ at time $k$. Formally,
\begin{equation} \label{eq:Xk}
X_i(k) = \bigcap_{h=1}^k M_i(h), \quad i=1,\dots,N.
\end{equation}
Since $\{X_i(k)\}$ is a nonincreasing sequence of sets by construction, it converges to
\begin{equation} \label{eq:X}
 X_i = \lim_{k \tinf} X_i(k).
\end{equation}
Hence, the \emph{global asymptotic feasible parameter set} can be defined as
\begin{equation}
\label{eq:Xi}
X = \bigcap_{i=1}^N X_i.
\end{equation}

In this work, we consider two different scenarios. In the first one, it is supposed that the agents process their measurements and update their estimates one at a time. For the sake of presentation it will be assumed that in this scenario the communication graph $\mathcal{G}$ is a ring, although the same approach can be applied in a more general setting. In the second scenario, the agents process simultaneously the information locally available and then broadcast the updated estimates to their neighbors. In the following, we present interpolatory estimators for the unknown parameter $x$ in both considered scenarios. The proposed algorithms generate sequences of estimates that eventually converge to a point belonging to the global feasible parameter set.
Let $P_Z[p]$ denote the projection of point $p \in \rr^n$ on the closed set $Z \subset \rr^n$, defined as
$$
P_Z [p] = \arg \min_{z \in Z} \norm{p-z},
$$ 
where $\norm{\cdot}$ denotes the 2-norm in $\rr^n$. 

\subsubsection{Incremental algorithm}
\label{sec:incr-SM}

The estimation of $x$ is carried out by cyclically projecting the current estimate on the local feasible parameter sets of each node. For $i=1,\dots,N$ and $k=0,1,2,\dots$, the incremental estimation algorithm with cyclic order of projections can be written as
 \begin{align}
 X_i(k+1) &= X_i(k) \cap M_i(k+1),   \label{eq:N1}\\
 x_i(k+1) &= P_{X_i(k+1)}[x_{i-1}(k+1)], \label{eq:N3}
 \end{align}
with the initial conditions $x_N(0)=x_0$ and $X_i(0) = \rr^n$, and the convention that $x_0(k+1)= x_N(k)$. 

\subsubsection{Distributed algorithm}
\label{sec:distr-SM}
In this scenario, an agent updates its estimate in two steps. First, it computes a weighted average of its estimate and those of its neighbors. Then, it projects such an average on the current local feasible parameter set. For $i=1,\dots,N$ and $k=0,1,2,\dots$, the distributed estimation algorithm can be written as
 \begin{align}
   X_i(k+1) &= X_i(k) \cap M_i(k+1),   \label{eq:D1}\\
   z_i(k) &= \sum_{j=1}^N a_{ij} x_j(k), \label{eq:D2} \\
   x_i(k+1) &= P_{X_i(k+1)}[z_i(k)], \label{eq:D3}
 \end{align}
with the initial conditions $x_i(0)$ and $X_i(0) = \rr^n$. Clearly, the weights $a_{ij}$ must comply with the topology of the communication graph, i.e. $a_{ij} = 0$ if  $(j,i) \notin \mathcal{E}$. 

\begin{remark}
\label{rem:2}
Notice that the incremental algorithm is not a special case of the distributed algorithm. Indeed, at each time step, in the distributed scheme the agents have to process their measurements simultaneously. In contrast, in the incremental scheme sequential updates of the agent estimates are performed. In this respect, although in~\eqref{eq:N1}-\eqref{eq:N3} a cyclic order of node activation has been assumed, the convergence property of the incremental algorithm that will be presented in Section~\ref{sec:convergence-result1} is preserved also with different activation rules. For instance, it can be shown that the order of the processing agents does not affect the convergence of the algorithm as long as each agent projects infinitely often on its local feasible set. 
\end{remark}

\begin{remark}
\label{rem:1}
The estimation schemes proposed in this paper can be seen as an extension of alternating and distributed projection methods for computing a point lying in the intersection of a \emph{finite} number of sets. In that case, the estimates are projected infinitely many times on each of the considered sets. This is a main difference with the approaches presented here. In fact, the considered scenarios involve the computation of a point lying in the intersection of an \emph{infinite} number of sets, each of which is in general processed just once. This is the typical situation in recursive set membership estimation schemes, in which a new measurement set is generated at each time instant. To the best of our knowledge, very few theoretical results are available for alternating projection schemes on infinite sets~\cite{B65}. The interested reader is referred to \cite{ER11,G13,BB96} for thorough reviews on alternating projection methods. Similarly, the proposed distributed estimation scheme builds on constrained consensus algorithms studied in \cite{NOP10,NL17} and adapts them to the case of infinite sequences of nonincreasing sets.
\end{remark}

In the next section, it will be shown that, under mild assumptions on the feasible measurement sets and on the communication graph, the sequences $\{x_i(k)\}$ generated by both algorithms converge to a common point $\hat x \in X$.

\section{Convergence analysis}
\label{sec:convergence-result}

\subsection{Incremental algorithm}
\label{sec:convergence-result1}

The following assumptions are made in the remaining of the paper.

\begin{assumption}
\label{ass:1}
  The feasible measurement sets $M_i(k)$, $k=1,2,\dots$, are closed convex sets. \qed
\end{assumption}
\begin{assumption}
\label{ass:2}
  The global asymptotic feasible parameter set $X$ is not empty. \qed
\end{assumption}

Under Assumption~\ref{ass:1}, the local feasible parameter sets $X_i(k)$, as well as their limit sets $X_i$, are closed convex sets, being the intersection of (possibly infinitely many) closed convex sets. In the considered framework, Assumption~\ref{ass:2} is always satisfied if the measurement noise does not violate the UBB constraints. The following lemmas are instrumental to prove the convergence of the estimates generated by the incremental algorithm.

\begin{lemma}
  \label{lem:semispace}
  Let $\{Z(k)\}$, $k=1,2,\dots$, be a nonincreasing sequence of closed convex subsets of $\rr^n$ and denote by $Z$ its limit. Consider an arbitrary point $p \in \rr^n$ and let $q(k)=P_{Z(k)}[p]$ be its projection on $Z(k)$. Then, for any $z \in Z$ and $k=1,2,\dots$, it holds 
  \begin{equation}
    \label{eq:z}
    (p-q(k))^{\top}(z-q(k)) \leq 0.
  \end{equation}
\end{lemma}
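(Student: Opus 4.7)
The plan is to reduce the statement to the standard variational characterization of projections onto closed convex sets, applied to $Z(k)$ rather than to the limit set $Z$. The only wrinkle is ensuring that the comparison point $z \in Z$ is a legitimate feasible point for the projection onto $Z(k)$, and this is exactly where the nonincreasing property of the sequence is used.

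First I would observe that, since $\{Z(k)\}$ is nonincreasing, the formula for the limit of a nonincreasing sequence of sets given earlier in Section~\ref{sec:preliminaries} yields $Z = \bigcap_{j\geq 1} Z(j)$, and therefore $Z \subseteq Z(k)$ for every $k$. Hence any $z \in Z$ also belongs to $Z(k)$, so it is an admissible comparison point when characterizing the projection of $p$ onto the closed convex set $Z(k)$.

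Next, I would invoke the standard first-order optimality condition for projections onto closed convex sets. Since $q(k) = P_{Z(k)}[p]$ minimizes $\norm{p - \cdot}^2$ over $Z(k)$, and since $Z(k)$ is convex, the segment $q(k) + t(z - q(k))$ with $t \in [0,1]$ lies entirely in $Z(k)$ whenever $z \in Z(k)$. Comparing the value of the squared distance at $q(k)$ with that at $q(k) + t(z - q(k))$, one obtains
\begin{equation*}
\norm{p - q(k) - t(z - q(k))}^2 \geq \norm{p - q(k)}^2.
\end{equation*}
Expanding the left-hand side gives $-2t(p-q(k))^{\top}(z-q(k)) + t^2 \norm{z - q(k)}^2 \geq 0$, and dividing by $t>0$ and letting $t\to 0^+$ yields $(p-q(k))^{\top}(z-q(k)) \leq 0$. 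Since the $z$ chosen in $Z$ also lies in $Z(k)$ by the first step, this is exactly~\eqref{eq:z}.

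There is no genuine obstacle here: the argument is essentially the textbook variational inequality for projections, and the nonincreasing hypothesis is used only once, to guarantee that $Z \subseteq Z(k)$ so that $z$ qualifies as a test point in the optimality condition for the projection onto $Z(k)$. Closedness of the sets is implicitly needed to ensure that the projection $q(k)$ is well defined, which Assumption~\ref{ass:1} together with~\eqref{eq:Xk} will guarantee in later applications.
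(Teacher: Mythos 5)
Your proof is correct and follows essentially the same route as the paper: both arguments reduce the claim to the standard projection inequality on $Z(k)$ and use the nonincreasing property only to conclude $Z \subseteq Z(k)$ so that $z$ is an admissible test point. The only cosmetic difference is that you derive the inequality from the first-order optimality condition along the segment $q(k)+t(z-q(k))$, whereas the paper invokes the supporting hyperplane at $q(k)$; these are two standard justifications of the same fact.
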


\begin{proof}
  If $p \in Z(k)$, then $q(k) = p$ and \eqref{eq:z} clearly holds. Conversely, assume $p \notin Z(k)$. Since $Z(k)$ is convex, there exists a halfspace with supporting hyperplane passing through $q(k)$ and orthogonal to $p-q(k)$, that contains the whole set $Z(k)$. Hence, for any $z(k) \in Z(k)$, $(p-q(k))^{\top}(z(k)-q(k)) \leq 0$. The thesis follows by observing that  $\{Z(k)\}$ is a nonincreasing sequence, hence $z \in Z$ implies that $z \in Z(k)$ for all $k$.
\end{proof}

\begin{lemma}
  \label{lem:accpoint}
  Let $\{Z(k)\}$, $k=1,2,\dots$, be a nonincreasing sequence of closed convex subsets of $\rr^n$ and denote by $Z$ its limit. Let $\{ z(k) \in Z(k) \}$ be a sequence admitting a convergent subsequence $\{z(k_j)\}$, $j=1,2,\dots,$ to a point $\hat z$. Then, $\hat z \in Z$.
\end{lemma}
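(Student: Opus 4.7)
The plan is to exploit the characterization of the limit of a nonincreasing sequence given in the preliminaries, namely $Z = \bigcap_{j \geq 1} Z(j)$, together with the closedness of each $Z(k)$. Concretely, I would show that $\hat z \in Z(k)$ for every fixed $k$, and then conclude $\hat z \in Z$ by taking the intersection.

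The key step is as follows. Fix an arbitrary index $k$. Since the subsequence indices $k_j$ tend to infinity, there exists $J$ such that $k_j \geq k$ for all $j \geq J$. By the nonincreasing property $Z(k_j) \subseteq Z(k)$ for such $j$, and since $z(k_j) \in Z(k_j)$ by hypothesis, we get $z(k_j) \in Z(k)$ for all $j \geq J$. Using that $Z(k)$ is closed and $z(k_j) \to \hat z$, it follows that $\hat z \in Z(k)$. Because $k$ was arbitrary, $\hat z \in \bigcap_{k \geq 1} Z(k) = Z$, which is the desired conclusion.

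There is no real obstacle here: the argument is essentially a bookkeeping exercise combining the monotonicity of the sets with the sequential characterization of closedness. The only subtlety to flag is that the hypothesis $z(k) \in Z(k)$ (rather than $z(k) \in Z$) is what forces us to pass to the tail of the subsequence, and that the identity $Z = \bigcap_{j \geq 1} Z(j)$ for nonincreasing sequences — noted just after the definition of the limit — is what makes the argument work without invoking the more intricate $\liminf$/$\limsup$ definitions directly.
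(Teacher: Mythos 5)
Your proof is correct, and it takes a genuinely different (and more elementary) route than the paper's. You argue directly: fix $k$, pass to the tail of the subsequence where $k_j \geq k$, use monotonicity to place $z(k_j)$ in $Z(k)$, invoke closedness of $Z(k)$ to get $\hat z \in Z(k)$, and conclude via $Z = \bigcap_{k\geq 1} Z(k)$. The paper instead argues by contradiction: it assumes $\hat z \notin Z$, takes $d = \mathrm{dist}(\hat z, Z) > 0$, notes that a ball $\mathcal{B}(\hat z,\varepsilon)$ with $\varepsilon < d$ eventually meets every $Z(k_j)$ (since the subsequence converges into it), yet is disjoint from $Z$ and hence --- so the paper claims, using monotonicity --- disjoint from $Z(k)$ for all large $k$. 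That last step of the paper's argument is actually the delicate one: passing from ``$Z \cap \mathcal{B} = \emptyset$'' to ``$Z(k) \cap \mathcal{B} = \emptyset$ for large $k$'' requires a finite-intersection-property argument on the nested compact sets $Z(k) \cap \overline{\mathcal{B}}$, which the paper leaves implicit. Your direct argument sidesteps this entirely, needing only sequential closedness of each individual $Z(k)$ and the identity $Z = \bigcap_{j\geq 1} Z(j)$ stated in the preliminaries; it is shorter, avoids any metric or compactness considerations, and in fact does not use convexity at all (neither does the paper's, to be fair). Both proofs are valid; yours is the cleaner of the two.
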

\begin{proof}
By contradiction, assume that $\hat z \notin Z$ and let $d$ be the distance of $\hat z$ to $Z$. Notice that since $Z$ is closed, $d$ must be strictly greater than zero. From the assumption of convergence, for any $\varepsilon > 0$  there exists a ball of radius $\varepsilon$ and centered at $\hat z$ which contains the subsequence $\{ z(k_j) \}$ from some $\bar j$ onward. Since $z(k_j) \in Z(k_j)$ by assumption, it follows that 
\begin{equation}
  \label{eq:contr}
  Z(k_j) \cap \mathcal{B}(\hat z,\varepsilon) \neq \emptyset, \quad \forall j \geq \bar j.
\end{equation}
But, if we take $\varepsilon < d$, then $Z \cap \mathcal{B}(\hat z,\varepsilon) = \emptyset$, which implies that there exists a $\bar k$ such that $Z(k) \cap \mathcal{B}(\hat z,\varepsilon) = \emptyset$ for all $k \geq \bar k$, since the sequence $\{ Z(k)\}$ is nonincreasing. This clearly 
contradicts \eqref{eq:contr}.  
\end{proof}

The main convergence result can now be proved by resorting to arguments adapted from the case of finite sets (e.g., see \cite{BD03}).
\begin{theorem}
  \label{th:covergence}
  Let Assumptions \ref{ass:1} and \ref{ass:2} hold. Let $\{x_i(k)\}$, $i=1,\dots,N$, $k=1,2,\dots$, be the sequences of estimates computed according to~\eqref{eq:N1},\eqref{eq:N3}. Then, there exists a point $\hat x$ such that
$$
\lim_{k\tinf} x_i(k) = \hat x \in X,\quad i=1,\dots,N,
$$
where $X$ is given by \eqref{eq:Xk}-\eqref{eq:Xi}.
\end{theorem}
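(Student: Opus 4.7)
The plan is to carry out a Fejér-monotonicity argument adapted to the moving-target setting where each agent's feasible set shrinks in time. Fix any reference point $z \in X$; since $X \subseteq X_i \subseteq X_i(k)$ for all $i,k$, the point $z$ lies in every set onto which we ever project. Apply Lemma~\ref{lem:semispace} with the nonincreasing sequence $\{X_i(k)\}$, $p = x_{i-1}(k+1)$, and $q = x_i(k+1)$ to obtain the standard projection inequality $(x_{i-1}(k+1) - x_i(k+1))^{\top}(z - x_i(k+1)) \leq 0$. Expanding $\|x_{i-1}(k+1) - z\|^2$ around $x_i(k+1)$ and using this inequality yields
\begin{equation*}
\|x_i(k+1) - z\|^2 \leq \|x_{i-1}(k+1) - z\|^2 - \|x_i(k+1) - x_{i-1}(k+1)\|^2.
\end{equation*}

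Next I would chain this estimate through one full cycle $i = 1, \dots, N$, using the convention $x_0(k+1) = x_N(k)$, to get
\begin{equation*}
\|x_N(k+1) - z\|^2 \leq \|x_N(k) - z\|^2 - \sum_{i=1}^{N} \|x_i(k+1) - x_{i-1}(k+1)\|^2.
\end{equation*}
Two consequences are immediate. First, $\{\|x_N(k) - z\|\}$ is nonincreasing and bounded below, hence convergent; in particular $\{x_N(k)\}$ is bounded. Second, telescoping over $k$ shows $\sum_{k,i} \|x_i(k+1) - x_{i-1}(k+1)\|^2 < \infty$, so all consecutive differences $\|x_i(k) - x_{i-1}(k)\|$ tend to zero; by the triangle inequality, for each $i$ the sequence $\{x_i(k)\}$ has the same asymptotic behaviour as $\{x_N(k)\}$.

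Boundedness of $\{x_N(k)\}$ gives a subsequence $x_N(k_j) \to \hat x$. The step above yields $x_i(k_j+1) \to \hat x$ for every $i$, and since $x_i(k_j+1) \in X_i(k_j+1)$, Lemma~\ref{lem:accpoint} applied to each nonincreasing sequence $\{X_i(k)\}$ forces $\hat x \in X_i$ for all $i$, i.e.\ $\hat x \in X$. Now upgrade subsequential convergence to full convergence: take $z = \hat x$ in the Fejér inequality. Then $\|x_N(k) - \hat x\|$ is nonincreasing and has a subsequence converging to zero, so the whole sequence converges to zero. Combined with $\|x_i(k) - x_N(k-1)\| \to 0$ (which follows from the vanishing-differences conclusion), this gives $x_i(k) \to \hat x$ for every $i$, as claimed.

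The main technical obstacle is the identification of the limit point as an element of $X$, because the sets $X_i(k)$ move with $k$ and the classical closedness argument used for projections onto a fixed convex set does not apply directly. This is precisely what Lemma~\ref{lem:accpoint} is designed to bridge: it lets one transfer the inclusion $x_i(k_j) \in X_i(k_j)$ to the limit inclusion $\hat x \in X_i$. Everything else is the standard Fejér-monotone bookkeeping, made slightly delicate only by the cyclic re-indexing $x_0(k+1) = x_N(k)$, which must be handled consistently when telescoping.
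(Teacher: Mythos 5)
Your proposal is correct and follows essentially the same route as the paper's proof: the Fej\'er-monotonicity inequality derived from Lemma~\ref{lem:semispace}, vanishing consecutive differences, extraction of a common subsequential limit identified as a point of $X$ via Lemma~\ref{lem:accpoint}, and the final upgrade to full convergence by substituting $\hat x$ for the reference point. The only cosmetic difference is that you telescope over full cycles to track $\{x_N(k)\}$, whereas the paper treats the fully interleaved sequence of squared distances as nonincreasing; the two bookkeeping choices are equivalent.
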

\begin{proof}
Let $\bar x$ be any point in $X$. From \eqref{eq:N3} one has
\begin{align*}
  \norm{x_{i-1}(k) - \bar x}^2 &= \norm{x_{i-1}(k) - x_{i}(k) + x_{i}(k) - \bar x}^2 \\
&= \norm{x_{i-1}(k) - x_i(k)  }^2 +  \norm{x_i(k)-\bar x}^2 \\ 
& \phantom{=} + 2(x_{i-1}(k) - x_i(k))^{\top}(x_i(k)-\bar x) \\
  & \geq \norm{ x_{i-1}(k) - x_i(k) }^2 +  \norm{x_i(k)-\bar x}^2,
\end{align*}
where the inequality follows from Lemma~\ref{lem:semispace}.
Hence, 
\begin{equation}
  \label{eq:yxi}
  \norm{x_i(k)-\bar x}^2 \leq \norm{x_{i-1}(k) - \bar x}^2 - \norm{x_{i-1}(k) - x_i(k) }^2,
\end{equation}
which implies that the sequence
$$
\{ \norm{x_1(1)-\bar x}^2,\dots, \norm{x_N(1)-\bar x}^2,\ \norm{x_1(2)-\bar x}^2,\dots \}
$$
is bounded and nonincreasing. Hence, it converges to some $\bar d \geq 0$. Consequently, the subsequences $\{ \norm{x_i(k)-\bar x}^2 \}$ converge to $\bar d$ as well
and, from \eqref{eq:yxi}, 
\begin{equation}
  \label{eq:xydist}
  \lim_{k \tinf} \norm{x_i(k)- x_j(k) }^2 = 0, \quad i,j=1,\dots,N.
\end{equation}
Since $\{\norm{x_i(k) - \bar x}^2\}$ is bounded, so are the estimates $x_i(k)$. Hence, from~\eqref{eq:xydist}, the sequences $\{x_i(k)\}$ admit subsequences $\{x_i(k_h)\}$, $h=1,2,\dots,$ converging to a common point $\hat x$. By invoking Lemma~\ref{lem:accpoint} for the sequences $\{x_i(k)\}$, it turns out that $\hat x$ must belong to $X_i$, $i=1,\dots,N$, and hence $\hat x \in X$. We can now replace $\bar x$ in \eqref{eq:yxi} with $\hat x$  and observe that $\{\norm{x_i(k)- \hat x}^2\}$ are nonincreasing bounded sequences, and hence they converge. But, since it has been previously shown that the subsequences $\{\norm{x_i(k_h)- \hat x}^2\}$ converge to zero, then necessarily
$$
\lim_{k\tinf} \norm{x_i(k)- \hat x}^2 = 0, \quad i=1,\dots,N,
$$
which concludes the proof.
\end{proof}

\begin{corollary}
  \label{cor:conv}
    Let Assumptions \ref{ass:1} and \ref{ass:2} hold. Let $\{x_i(k)\}$, $i=1,\dots,N$, $k=1,2,\dots$, be the sequences of estimates computed according to~\eqref{eq:N1},\eqref{eq:N3}. If $X = \{ x \}$, then 
$$
\lim_{k\tinf} x_i(k) = x, \quad i=1,\dots,N.
$$ \qed
\end{corollary}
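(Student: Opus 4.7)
The plan is to derive the corollary as an immediate consequence of Theorem~\ref{th:covergence}. Since Assumptions~\ref{ass:1} and~\ref{ass:2} are in force, Theorem~\ref{th:covergence} guarantees the existence of a point $\hat{x} \in X$ such that $\lim_{k\tinf} x_i(k) = \hat{x}$ for every $i = 1, \dots, N$. The hypothesis $X = \{x\}$ forces $X$ to consist of a single element, so the only candidate for the limit point $\hat{x}$ is $x$ itself.

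Thus the proof reduces to the single observation $\hat{x} = x$, after which the conclusion $\lim_{k\tinf} x_i(k) = x$ follows directly from Theorem~\ref{th:covergence}. No additional machinery, no further invocation of Lemma~\ref{lem:semispace} or Lemma~\ref{lem:accpoint}, and no new convergence argument are required.

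There is no real obstacle here: the corollary is a trivial specialization of the theorem to the case in which the global asymptotic feasible parameter set is a singleton. The only thing worth remarking on, if one wished to elaborate, is that Assumption~\ref{ass:2} is automatically consistent with the singleton hypothesis (since $\{x\} \neq \emptyset$), so no compatibility check on the assumptions is needed.
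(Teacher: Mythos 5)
Your proof is correct and matches the paper's intent exactly: the paper states Corollary~\ref{cor:conv} without a written proof precisely because it is the immediate specialization of Theorem~\ref{th:covergence} to the case where $X$ is a singleton, so the limit point $\hat x \in X$ must equal $x$. Nothing further is needed.
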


Corollary~\ref{cor:conv} states that if the asymptotic feasible parameter set is a singleton, then the sequence of the estimates converges to the true value $x$.

\subsection{Distributed algorithm}
The following assumptions are made, in order to guarantee the convergence of the consensus protocol embedded in the distributed algorithm~\eqref{eq:D1}-\eqref{eq:D3}.
\begin{assumption}
  \label{ass:3}
  The communication graph $\mathcal{G}$ is strongly connected. \qed
\end{assumption}

\begin{assumption}
  \label{ass:4}
  For all $i,j=1,\dots,N$, the weights $a_{ij} \geq 0$ in \eqref{eq:D2} satisfy
  \begin{itemize}
  \item $a_{ii} > 0$;
  \item if $i \neq j$, $a_{ij} > 0$ if and only if $(j,i) \in \mathcal{E}$;
  \item $\sum_{j=1}^N a_{ij}=1$.    \qed
  \end{itemize} 
\end{assumption}
The following result holds (see, e.g., Theorems 7.6 and 7.10 in \cite{CL08}).
\begin{lemma}
  \label{lem:markov}
  Let Assumptions~\ref{ass:3} and \ref{ass:4} hold. Then, there exists a unique $v \in \rr^N$, with $v_i> 0$, $i=1,\dots, N$, $\sum_{i=1}^N v_i = 1$ and such that $\sum_{i=1}^N v_i a_{ij} = v_j$, $j=1,\dots,N$. \qed
\end{lemma}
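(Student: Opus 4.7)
The plan is to interpret the matrix $A=[a_{ij}]$ as the transition matrix of an $N$-state time-homogeneous Markov chain, and then to appeal to the standard Perron--Frobenius theory for stochastic matrices. Under Assumption~\ref{ass:4}, $A$ has nonnegative entries with every row summing to one, so $A$ is row-stochastic. The claim of the lemma is that $A^\top$ admits a unique stationary probability vector $v$ with strictly positive entries, i.e.\ $A^\top v = v$, $v_i>0$, $\sum_i v_i = 1$.

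First I would verify the two structural properties needed for Perron--Frobenius. Irreducibility of $A$ follows directly from Assumption~\ref{ass:3}: for any pair $(i,j)$, strong connectivity of $\mathcal{G}$ provides a directed path $i=i_0\to i_1\to\cdots\to i_\ell=j$ in $\mathcal{E}$, and the second bullet of Assumption~\ref{ass:4} guarantees $a_{i_{s}i_{s-1}}>0$ along this path (reading the indices in accordance with the convention $(j,i)\in\mathcal{E}\Leftrightarrow a_{ij}>0$), so some power of $A$ has a positive $(i,j)$ entry. Aperiodicity is immediate from the first bullet, $a_{ii}>0$, which creates a self-loop at every state and thus forces the period of each state to be one. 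Hence $A$ is an irreducible, aperiodic, row-stochastic matrix.

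Next I would invoke the Perron--Frobenius theorem (or, equivalently, the fundamental theorem for finite ergodic Markov chains, as in the cited Theorems 7.6 and 7.10 of \cite{CL08}). Row-stochasticity gives $A\mathbf{1}=\mathbf{1}$, so $1$ is an eigenvalue of $A$ and also of $A^\top$. Irreducibility and nonnegativity imply that the spectral radius $1$ is a simple eigenvalue of $A^\top$, with a corresponding eigenvector whose entries can all be chosen strictly positive. Aperiodicity rules out any other eigenvalue of modulus one. Normalizing this eigenvector so that its components sum to one yields the desired $v$, and simplicity of the eigenvalue gives uniqueness.

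The only subtlety I would want to handle carefully is the sign convention relating the edge set $\mathcal{E}$ to the nonzero pattern of $A$: the paper declares that $(j,i)\in\mathcal{E}$ means $j$ sends to $i$, which in turn means $a_{ij}>0$. Thus paths in $\mathcal{G}$ from $i$ to $j$ correspond to positive entries of powers of $A^\top$ between states $i$ and $j$, which is exactly what is needed to conclude irreducibility of the associated Markov chain. Beyond this bookkeeping, the argument is routine, and there is no genuine technical obstacle once Perron--Frobenius is in hand.
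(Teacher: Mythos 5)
Your proof is correct: the paper states this lemma without proof, delegating it to Theorems 7.6 and 7.10 of \cite{CL08}, and your Perron--Frobenius argument (irreducibility from strong connectivity plus the edge/weight convention, positive diagonal for aperiodicity, simplicity of the unit eigenvalue giving a unique positive normalized left eigenvector) is precisely the standard content of those cited results. The only minor observation is that aperiodicity is not actually needed for existence and uniqueness of the stationary vector---irreducibility alone suffices---but including it is harmless.
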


By exploiting the input-to-state stability property of consensus protocols \cite{KRB05}, the following result holds.
\begin{lemma}
  \label{lem:ISS}
  Let Assumptions~\ref{ass:3} and \ref{ass:4} hold and consider the dynamic system
  \begin{equation}
    \label{eq:aver}
      x_i(k+1) = \sum_{j=1}^N a_{ij} x_j(k) + u_i(k), \quad i=1,\dots,N,
  \end{equation}
where $x_i(k) \in \rr^n$, $u_i(k) \in \rr^n$. If $\lim_{k\tinf} u_i(k) = 0$, for $i=1,\dots,N$, then $\lim_{k \tinf} \norm{x_i(k) - x_j(k)} = 0$, for all $i,j$.~\qed
\end{lemma}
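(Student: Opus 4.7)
The plan is to decompose the consensus dynamics~\eqref{eq:aver} into the evolution of a weighted average and a disagreement error, and then to invoke the fact that the disagreement subsystem is a discrete-time linear system with a Schur-stable state matrix driven by a vanishing input.

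First, using the unique Perron left eigenvector $v$ provided by Lemma~\ref{lem:markov}, I define the weighted average $\bar{x}(k) = \sum_{i=1}^N v_i x_i(k) \in \rr^n$. Multiplying~\eqref{eq:aver} by $v_i$, summing over $i$, and exploiting the left invariance $\sum_i v_i a_{ij} = v_j$, one obtains
$$
\bar{x}(k+1) = \bar{x}(k) + \bar{u}(k), \qquad \bar{u}(k) = \sum_{i=1}^N v_i u_i(k),
$$
with $\bar{u}(k) \to 0$ as a finite convex combination of vanishing signals. Introducing the disagreement variables $e_i(k) = x_i(k) - \bar{x}(k)$ and subtracting the average dynamics from~\eqref{eq:aver} (using that $A$ is row-stochastic, so $\sum_j a_{ij}\bar{x}(k) = \bar{x}(k)$), one gets
$$
e_i(k+1) = \sum_{j=1}^N a_{ij} e_j(k) + \bigl(u_i(k) - \bar{u}(k)\bigr),
$$
a linear recursion driven by an input that tends to zero. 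By construction $\sum_i v_i e_i(k) \equiv 0$, so the trajectory is confined to the subspace $V = \{e : \sum_i v_i e_i = 0\}$.

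The crux is to show that the autonomous part of this recursion is exponentially stable on the invariant subspace $V$. Under Assumptions~\ref{ass:3} and~\ref{ass:4}, the matrix $A=[a_{ij}]$ is row-stochastic with strictly positive diagonal and irreducible sparsity pattern, and is therefore \emph{primitive}; by Perron-Frobenius, $1$ is a simple eigenvalue with the all-ones right eigenvector, while all remaining eigenvalues lie strictly inside the unit disk. Consequently the restriction of $A$ to $V$ has spectral radius strictly less than $1$, and standard linear input-to-state stability arguments (cf.~\cite{KRB05}) yield $e_i(k) \to 0$ for every $i$. This in turn gives $\norm{x_i(k) - x_j(k)} = \norm{e_i(k) - e_j(k)} \to 0$ for all $i,j$, as claimed.

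The main obstacle, for a fully self-contained argument, is establishing the spectral contraction on $V$: this is precisely where the assumption $a_{ii} > 0$ enters, since it rules out periodicity and promotes irreducibility to primitivity. A clean alternative to the eigenvalue argument is a Dobrushin coefficient of ergodicity estimate showing that $A^k$ converges geometrically to $\mathbf{1}v^{\top}$, or the construction of a quadratic Lyapunov function $e^{\top} P e$ tailored to $v$; either path suffices to turn the qualitative ISS citation into a quantitative bound.
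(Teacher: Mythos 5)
Your proof is correct. The paper does not actually prove Lemma~\ref{lem:ISS}; it simply invokes the input-to-state stability of consensus protocols with a citation to \cite{KRB05}, and your argument --- splitting the dynamics into the $v$-weighted average and the disagreement on the invariant subspace $V$, using primitivity of $A$ (irreducibility from Assumption~\ref{ass:3} plus $a_{ii}>0$ from Assumption~\ref{ass:4}) to get a Schur-stable restriction, and then the standard vanishing-input argument --- is precisely the standard way to substantiate that citation, so it is consistent with and completes the paper's intended route.
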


\begin{theorem}
  \label{th:covergenceD}
  Let Assumptions \ref{ass:1}-\ref{ass:4} hold. Let $\{x_i(k)\}$, $i=1,\dots,N$, $k=1,2,\dots$, be the sequences of estimates computed according to~\eqref{eq:D1}-\eqref{eq:D3}. Then, there exists a point $\hat x$ such that
$$
\lim_{k\tinf} x_i(k) = \hat x \in X,\quad i=1,\dots,N,
$$
where $X$ is given by \eqref{eq:Xk}-\eqref{eq:Xi}.
\end{theorem}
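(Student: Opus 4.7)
The plan is to mimic the structure of the proof of Theorem~\ref{th:covergence}, but replace the simple nonincreasing Lyapunov function $\|x_i(k)-\bar x\|^2$ with a weighted version $V(k) = \sum_{i=1}^N v_i \|x_i(k)-\bar x\|^2$, where $v$ is the left Perron eigenvector provided by Lemma~\ref{lem:markov}. Fix $\bar x \in X$ (nonempty by Assumption~\ref{ass:2}). Since $\bar x \in X_i \subseteq X_i(k+1)$, Lemma~\ref{lem:semispace} applied to the projection $x_i(k+1) = P_{X_i(k+1)}[z_i(k)]$ yields the Fej\'er-type inequality $\|x_i(k+1)-\bar x\|^2 \leq \|z_i(k)-\bar x\|^2 - \|z_i(k)-x_i(k+1)\|^2$. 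Using the convexity of the squared norm, $\|z_i(k)-\bar x\|^2 \leq \sum_j a_{ij}\|x_j(k)-\bar x\|^2$, multiplying by $v_i$ and summing over $i$, the crucial invariance $\sum_i v_i a_{ij} = v_j$ from Lemma~\ref{lem:markov} gives
\begin{equation*}
V(k+1) \leq V(k) - \sum_{i=1}^N v_i \|z_i(k)-x_i(k+1)\|^2.
\end{equation*}

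Since $V(k)$ is nonincreasing and nonnegative, it converges, so $\sum_i v_i \|z_i(k)-x_i(k+1)\|^2 \to 0$, and because $v_i > 0$ this forces $x_i(k+1)-z_i(k) \to 0$ for every $i$. The update \eqref{eq:D3} can then be rewritten as $x_i(k+1) = \sum_j a_{ij} x_j(k) + u_i(k)$ with $u_i(k) := x_i(k+1)-z_i(k) \to 0$, so Lemma~\ref{lem:ISS} immediately yields $\|x_i(k)-x_j(k)\| \to 0$ for all $i,j$. Boundedness of $V(k)$ gives boundedness of each $\{x_i(k)\}$, hence a common accumulation point $\hat x$ along a subsequence $k_h$ (the consensus property ensures the same $\hat x$ for every agent). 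Applying Lemma~\ref{lem:accpoint} to each sequence $\{x_i(k_h)\} \subset X_i(k_h)$ shows $\hat x \in X_i$ for every $i$, i.e.\ $\hat x \in X$.

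To upgrade subsequential convergence to full convergence, I would repeat the Lyapunov argument with the particular choice $\bar x = \hat x \in X$. This makes $V(k)$ nonincreasing, but the subsequence $V(k_h) = \sum_i v_i \|x_i(k_h)-\hat x\|^2$ tends to zero by construction; monotonicity then forces $V(k) \to 0$, and hence $x_i(k) \to \hat x$ for all $i$, concluding the proof.

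The main obstacle is the first step: finding the right Lyapunov function. The naive candidate $\sum_i \|x_i(k)-\bar x\|^2$ is not monotone because the consensus mixing step, seen from the perspective of agent $i$, is a convex combination of neighbors' squared distances rather than a projection onto a convex set. Weighting by the stationary distribution $v$ of the stochastic matrix $A = (a_{ij})$ is exactly what is needed to convert $\sum_i v_i \sum_j a_{ij}\|x_j(k)-\bar x\|^2$ into $\sum_j v_j \|x_j(k)-\bar x\|^2$, restoring monotonicity; Assumptions~\ref{ass:3}--\ref{ass:4} are used precisely to guarantee existence and positivity of $v$ via Lemma~\ref{lem:markov}. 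Once the weighted potential is in place, the remaining steps parallel those of Theorem~\ref{th:covergence}.
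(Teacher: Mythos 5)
Your proposal is correct and follows essentially the same route as the paper: the weighted potential $\sum_i v_i\norm{x_i(k)-\bar x}^2$ with the left Perron vector from Lemma~\ref{lem:markov}, the Fej\'er inequality from Lemma~\ref{lem:semispace} combined with Jensen's inequality, the ISS consensus argument of Lemma~\ref{lem:ISS} applied to $u_i(k)=x_i(k+1)-z_i(k)\to 0$, Lemma~\ref{lem:accpoint} to place the accumulation point in $X$, and the final substitution $\bar x=\hat x$ to upgrade to full convergence. The only cosmetic difference is that the paper tracks the interleaved sequence containing both $\sum_i v_i\norm{x_i(k)-\bar x}^2$ and $\sum_i v_i\norm{z_i(k)-\bar x}^2$, whereas you chain the two inequalities into a single one-step decrease of $V(k)$; the content is identical.
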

\begin{proof}
Let $\bar x$ be any point in $X$. From \eqref{eq:D2},\eqref{eq:D3}, one has that for $i=1,\dots,N$
\begin{equation}
  \label{eq:z1}
  \norm{x_i(k+1) -\bar x }^2 \leq  \norm{z_i(k) -\bar x }^2 \leq \sum_{j=1}^N a_{ij} \norm{x_j(k) - \bar x}^2,
\end{equation}
where the first inequality comes from the properties of projections on convex sets and the second one from Jensen's inequality. Moreover, similarly to the proof of Theorem~\ref{th:covergence}, from Lemma~\ref{lem:semispace} it follows that
\begin{equation}
  \label{eq:z2}
  \norm{x_i(k+1) - \bar x }^2 \leq \norm{z_i(k) - \bar x }^2 - \norm{x_i(k+1) - z_i(k) }^2.
\end{equation}
Let the vector $v\in\rr^N$ be defined as in Lemma~\ref{lem:markov}. From~\eqref{eq:z2} one gets
\begin{equation}
  \label{eq:z3}
  \begin{split}
  \sum_{i=1}^N v_i \norm{x_i(k+1) - \bar x }^2 \leq \sum_{i=1}^N v_i  \norm{z_i(k) - \bar x }^2 \\- \sum_{i=1}^N v_i  \norm{x_i(k+1) - z_i(k) }^2.
\end{split}
\end{equation}
Moreover, inequalities~\eqref{eq:z1} imply that
\begin{align}
  \sum_{i=1}^N v_i \norm{x_i(k+1) -\bar x }^2 &\leq \sum_{i=1}^N v_i  \norm{z_i(k) -\bar x }^2  \nonumber\\
  &\leq \sum_{i=1}^N v_i  \sum_{j=1}^N a_{ij} \norm{x_j(k) - \bar x}^2    \label{eq:d1}\\
  & = \sum_{i=1}^N v_i  \norm{x_i(k) - \bar x}^2, \nonumber
\end{align}
where the last equality stems from Lemma~\ref{lem:markov}. By \eqref{eq:d1}, for $i=1,\dots,N$, the sequence
\begin{equation} \nonumber
\begin{split}
\{\sum_{i=1}^N \!v_i \!\norm{x_i(1) \!-\!\bar x }^2\!, \sum_{i=1}^N \!v_i\!  \norm{z_i(1)\! -\!\bar x }^2\!, \sum_{i=1}^N\! v_i\! \norm{x_i(2)\! -\!\bar x }^2\!, \! \dots \}
\end{split}
\end{equation}
is bounded and nonincreasing. Hence, it converges to some $\bar d \geq 0$, i.e.
\begin{equation}
  \label{eq:zconv}
  \lim_{k\tinf} \sum_{i=1}^N v_i  \norm{x_i(k) - \bar x }^2 = \lim_{k\tinf} \sum_{i=1}^N v_i  \norm{z_i(k) - \bar x }^2 = \bar d.
\end{equation}
From~\eqref{eq:z3} and \eqref{eq:zconv}, it follows
$$
\lim_{k\tinf} \sum_{i=1}^N v_i  \norm{x_i(k+1) - z_i(k) }^2 = 0,
$$
which, together with Lemma~\ref{lem:markov}, implies that 
\begin{equation}
  \label{eq:z4}
  \lim_{k\tinf} \norm{x_i(k+1) - z_i(k) }^2 = 0.
\end{equation}
Now, observe that \eqref{eq:D2},\eqref{eq:D3} can be rewritten as~\eqref{eq:aver}, where 
\begin{equation}
  \label{eq:ueq}
  u_i(k) = P_{X_i(k+1)}[z_i(k)] - z_i(k).
\end{equation}
Since \eqref{eq:D3}, \eqref{eq:z4} and \eqref{eq:ueq} imply that $\lim_{k\tinf} u_i(k) = 0$, from Lemma~\ref{lem:ISS} one gets
\begin{equation}
  \label{eq:conv}
  \lim_{k\tinf} \norm{x_i(k)- x_j(k)} = 0, \quad i,j=1,\dots,N.
\end{equation}
Being $\{ \sum_{i=1}^N v_i \norm{x_i(k) -\bar x }^2 \}$ bounded, so are the sequences $\{x_i(k)\}$. Hence, from \eqref{eq:conv}, there exist subsequences $\{x_i(k_h)\}$ converging to a common $\hat x$, which implies
\begin{equation}
  \label{eq:convx2}
  \lim_{h\tinf} \sum_{i=1}^N v_i \norm{x_i(k_h) - \hat x}^2 = 0.
\end{equation}
From Lemma~\ref{lem:accpoint}, $\hat x \in X$. By setting $\bar x = \hat x$ in~\eqref{eq:zconv} and exploiting \eqref{eq:convx2}, we conclude that
$$
\lim_{k\tinf} \sum_{i=1}^N v_i \norm{x_i(k) - \hat x}^2 = 0, 
$$
which, from Lemma \ref{lem:markov}, implies that $\lim_{k\tinf} x_i(k) = \hat x$, $i=1,\dots,N$.
\end{proof}

\begin{corollary}
  \label{cor:conv2}
  Let Assumptions \ref{ass:1}-\ref{ass:4} hold. Let $\{x_i(k)\}$, $i=1,\dots,N$, $k=1,2,\dots$, be the sequences of estimates computed according to~\eqref{eq:D1}-\eqref{eq:D3}. If $X = \{ x \}$, then 
$$
\lim_{k\tinf} x_i(k) = x.
$$ \qed
\end{corollary}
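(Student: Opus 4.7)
The plan is to read this corollary as an immediate specialization of Theorem~\ref{th:covergenceD}. Since Assumptions~\ref{ass:1}--\ref{ass:4} are in force and the estimates are produced by the distributed recursion \eqref{eq:D1}--\eqref{eq:D3}, Theorem~\ref{th:covergenceD} already guarantees the existence of a point $\hat x \in X$ to which every sequence $\{x_i(k)\}$ converges. The additional hypothesis $X=\{x\}$ then pins down $\hat x$ uniquely.

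Concretely, I would apply Theorem~\ref{th:covergenceD} to obtain $\hat x \in X$ with $\lim_{k\tinf} x_i(k) = \hat x$ for each $i=1,\dots,N$. Because $X = \{x\}$, the inclusion $\hat x \in X$ forces $\hat x = x$. Substituting into the limit relation gives $\lim_{k\tinf} x_i(k) = x$ for every $i$, which is precisely the statement of the corollary.

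There is essentially no obstacle to overcome: all of the analytical work (the joint Fej\'er-type inequality \eqref{eq:d1}, the vanishing of the consensus disagreement via Lemma~\ref{lem:ISS}, and the identification of the cluster point as an element of $X$ through Lemma~\ref{lem:accpoint}) has already been carried out in the proof of Theorem~\ref{th:covergenceD}. The only ingredient added here is the trivial observation that a singleton feasible set eliminates the ambiguity in the choice of $\hat x$. Accordingly, the proof is a one-line appeal to the theorem, and no further lemmas or computations are needed.
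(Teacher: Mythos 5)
Your proposal is correct and coincides with the paper's own (omitted, because immediate) argument: the corollary is exactly Theorem~\ref{th:covergenceD} specialized to the case where $X$ is a singleton, so $\hat x \in X = \{x\}$ forces $\hat x = x$. Nothing further is needed.
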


\begin{remark}
\label{rem:3}
Both estimation algorithms presented in this paper require a very small amount of information to be exchanged among the agents. In fact, only the current estimates are shared in the network, whereas both the measurements and the corresponding feasible sets are computed and stored locally by each agent.
\end{remark}

\begin{remark}
\label{rem:4}
Any choice of the weights satisfying Assumption~\ref{ass:4} guarantees convergence of the estimates. Possible alternatives falling into this class are uniform weights, Metropolis-Hastings weights or maximum degree weights. Together with the graph topology, the choice of the weights affects the convergence rate (e.g., see \cite{XB04}).
\end{remark}

\section{Application to linear regression models}
\label{sec:application}
The nonincreasing property of the sequence of sets on which the estimates are projected is crucial for the convergence of the proposed algorithms to the global asymptotic feasible parameter set $X$. Indeed, when such a property is not satisfied, counterexamples can be easily constructed.

The only requirement on the measurement sets $M_i(k)$ for the above results to hold is that they be closed convex sets (not even necessarily bounded). In practice, the proposed approaches can be applied whenever the intersection $X_i(k)$ of the measurement sets can be computed and stored easily (see~\eqref{eq:N1} and~\eqref{eq:D1}). This actually occurs in many situations of practical interest, in which the measurement sets have some suitable structure, such as strips, circular sectors, parallelotopes or, more generally, weighted balls in some vector norm. In the following, we use the proposed algorithms to estimate the parameters of a linear regression model.

Consider a sensor network composed by $N$ nodes. At time $k$, each node $i$ takes a measurement of the unknown parameter $\theta^\star\in\rr^n$ according to the  model
\begin{equation}\label{eq:measStrip}
  y_i(k)=\varphi_i^{\top} \theta^\star+w_i(k),
\end{equation}
where $y_i(k)\in\rr$, $\varphi_i\in\rr^n$ and $w_i(k)\in\rr$ is an UBB noise such that
\begin{equation}\label{eq:UBB}
  |w_i(k)|\leq \epsilon_i, \quad \forall k,
\end{equation}
where $\epsilon_i\geq 0$. Without loss of generality, we assume the regressors $\varphi_i$ to be normalized, i.e. $\|\varphi_i\|=1$, for $i=1,\dots,N$. From \eqref{eq:measStrip} and~\eqref{eq:UBB}, we have that each measurement set is a strip in the parameter space, i.e.
\begin{equation}
  \label{eq:MkiStrip}
  M_i(k)=\{\theta \in \rr^n : |\varphi_i^{\top} \theta-y_i(k)|\leq \epsilon_i\}.
\end{equation}
Notice that, from~\eqref{eq:N1} and~\eqref{eq:MkiStrip}, the local feasible set for node $i$ at time $k$ can be written as
\begin{equation}
\label{eq:XkiStrip}
  X_i(k)=\{\theta \in \rr^n : \max_{h\leq k}\{y_i(h)\}-\epsilon_i \leq \varphi_i^{\top} \theta\leq \min_{h\leq k}\{y_i(h)\}+\epsilon_i \}.
\end{equation}
We stress that~\eqref{eq:XkiStrip} can be easily computed by each node by storing just the maximum and minimum value of the measurements $y_i(k)$. 

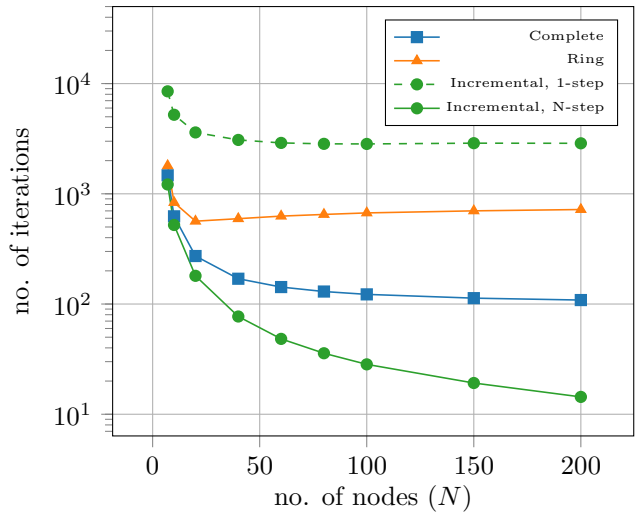
\begin{figure}
  \centering
  \begin{tikzpicture}

	\definecolor{color0}{rgb}{0.12156862745098,0.466666666666667,0.705882352941177}
	\definecolor{color1}{rgb}{1,0.498039215686275,0.0549019607843137}
	\definecolor{color2}{rgb}{0.172549019607843,0.627450980392157,0.172549019607843}
	
	\begin{axis}[
	title={},
	xmin=-18.7, xmax=224.7,
	ymin=-173.9138, ymax=50000,
	tick align=outside,
	tick pos=left,
	ymode = log,
	xlabel = {no. of nodes ($N$)},
	ylabel = {no. of iterations},
	xmajorgrids,
	x grid style={white!69.019607843137251!black},
	ymajorgrids,
	y grid style={white!69.019607843137251!black},
	legend style={
		font = \tiny,
	        cells={anchor=east},
	        legend pos= north east,
	    }
	]
	\addplot [semithick, color0, mark=square*, mark size=2, mark options={solid}]
	table {%
	7 1472.664
	10 624.937999999999
	20 272.636
	40 169.639
	60 142.553
	80 129.828
	100 122.356
	150 113.069
	200 108.666
	};
	\addlegendentry{Complete}
	
	\addplot [semithick, color1, mark=triangle*, mark size=2, mark options={solid}]
	table {%
	7 1799.242
	10 834.196999999999
	20 564.625999999999
	40 594.842999999999
	60 627.551
	80 649.988000000001
	100 671.092
	150 700.54
	200 720.306999999999
	};
	\addlegendentry{Ring}
	
	\addplot [semithick, dashed, color2, mark=*, mark size=2, mark options={solid}]
	table {%
	7 8509.90800000001
	10 5214.99799999999
	20 3605.02
	40 3082.506
	60 2896.898
	80 2840.873
	100 2835.374
	150 2880.771
	200 2873.071
	};
	\addlegendentry{Incremental, 1-step}
	
	\addplot [semithick, color2, mark=*, mark size=2, mark options={solid}]
	table {%
	7 1215.70114285714
	10 521.499799999999
	20 180.251
	40 77.06265
	60 48.2816333333333
	80 35.7609125
	100 28.35374
	150 19.20514
	200 14.365355
	};
	\addlegendentry{Incremental, N-step}
	
	\end{axis}
	
	\end{tikzpicture}
  \caption{Average number of iterations for different number of nodes $N$.}
  \label{fig:logiter}
\end{figure}

In order to compare the convergence rate of the two algorithms, a Monte Carlo simulation has been performed for the case $n=5$. For different values of $N$ ranging from $7$ to $200$, 1000 simulation runs have been performed. At each run, the initial estimates $x_i(0)$ are uniformly generated in $[-5,\, 5]^n$. Similarly, the regressors $\varphi_i$ in~\eqref{eq:measStrip} are uniformly generated in $[0,\, 1]^n$ and then normalized so that $\|\varphi_i\|=1$. The measurement errors $w_i(k)$ are uniformly distributed in $[-\epsilon_i,\ \epsilon_i]$, with  $\epsilon_i$ selected at random in $[0.10,\ 0.13]$. The distributed algorithm has been tested with two differpsent network topologies, namely a complete graph and a ring. The weights have been chosen so that each agent computes the average of its estimate and those of its neighbors, i.e. the non-zero weights are set to $1/N$ for the complete graph and to $1/3$ for the ring graph. Each simulation run is stopped when the distance of the estimates of all the nodes to $X$ is smaller than $\delta=10^{-3}$. The true value of the unknown parameter $\theta^\star$ is uniformly generated in $[-5,\, 5]^n$.

The average number of iterations needed to reach the given accuracy $\delta$ is shown in Fig.~\ref{fig:logiter}. It can be seen that the incremental estimation scheme (green solid line) features a faster convergence rate than the distributed ones, the difference increasing with the number of nodes. As expected, among the two network topologies considered, the complete graph (blue solid line) shows a faster convergence with respect to the ring graph (orange solid line). However, in the above comparison it is implicitly assumed that the incremental algorithm is able to carry out a complete cycle of node activations between two consecutive measurements (i.e. the whole cycle of $N$ sequential projections can be completed before a new set of measurements is available). This may not be the case in many real-world scenarios (e.g. large networks, nodes with limited processing power, reduced communication bandwidth). In the limit case in which the time required by one node for processing a measurement and passing its estimate to its neighbors is comparable with the sampling time of the measurement process, the incremental algorithm as defined in \eqref{eq:N1}-\eqref{eq:N3} must be modified in order to account for the fact that only one node can be active at each time $k$. In this scenario, the distributed scheme is clearly faster than the incremental one (green dashed line), even in case of scarcely connected topologies like a ring.

\section{Conclusions}
\label{sec:conclusions}
Two interpolatory algorithms for distributed set membership estimation have been proposed. Under the assumption of convex measurement sets, both techniques generate  sequences of estimates that converge to a point lying in the global asymptotic feasible set. The main technical contribution of this work is the proof of convergence of the proposed estimation schemes. This is a non trivial result, since projections on infinitely many sets are involved. From a practical viewpoint, the proposed techniques are general enough to be suitable for any kind of sensors whose uncertainty measurement set is convex. Numerical results show that the incremental estimation scheme provides better results in terms of convergence rate than its distributed counterpart. However, a number of factors may limit its applicability (e.g.,  large networks, slow processors or communication link, high measurement frequency). In all this cases, the distributed algorithm represents an effective alternative.

Several extensions of this work are currently under investigation, including studying the convergence rate of the proposed algorithms, extending the convergence analysis to time-varying communication graphs, devising distributed algorithms for set-valued estimation. A further problem of interest when the network is a design parameter is to find the combination of graph topology and edge weights maximizing the convergence rate.

\balance
\bibliographystyle{IEEEtran}
\bibliography{biblio}

\end{document}